\newtheorem{thm}{Theorem}[section]
\numberwithin{equation}{section}
\theoremstyle{definition}
\newtheorem{definition}[thm]{Definition}
\newtheorem{rem}[thm]{Remark}
\newtheorem{example}[thm]{Example}
\begin{document}

\title[Fractional Cauchy problems in $q$-calculus]
{Fractional Cauchy problems associated with the bi-ordinal Hilfer  fractional $q$-derivative}

\author[Erkinjon Karimov]{Erkinjon Karimov}
\address{
  Erkinjon Karimov:
  \endgraf
  Fergana State University
  \endgraf
 19 Murabbiylar str., Fergana, 140100
  \endgraf
 Uzbekistan
 \endgraf
   and
  \endgraf
  V.I.Romanovskiy Institute of Mathematics
  \endgraf
 9 Universitet str., Tashkent, 100174
 \endgraf
 Uzbekistan
  \endgraf
 {\it E-mail address} {\rm erkinjon@gmail.com}
  }

\author[Michael Ruzhansky]{Michael Ruzhansky}
\address{
 Michael Ruzhansky:
  \endgraf
Department of Mathematics: Analysis, Logic and Discrete Mathematics,
  \endgraf
 Ghent University, Ghent,
 \endgraf
  Belgium 
  \endgraf
  and 
 \endgraf
 School of Mathematical Sciences, Queen Mary University of London, London,
 \endgraf
 UK  
 \endgraf
  {\it E-mail address} {\rm michael.ruzhansky@ugent.be}
  }

\author[Serikbol Shaimardan]{Serikbol Shaimardan}
\address{
  Serikbol Shaimardan:
  \endgraf
  L. N. Gumilyov Eurasian National University, Astana,
  \endgraf
  Kazakhstan 
  \endgraf
  and 
  \endgraf
Department of Mathematics: Analysis, Logic and Discrete Mathematics
  \endgraf
 Ghent University, Ghent,
 \endgraf
  Belgium
  \endgraf
  {\it E-mail address} {\rm shaimardan.serik@gmail.com} 
  }

\thanks{The authors are supported by the FWO Odysseus 1 grant G.0H94.18N: Analysis and Partial Differential Equations and by the Methusalem programme of the Ghent University Special Research Fund (BOF) (Grant number 01M01021). Michael Ruzhansky is also supported by EPSRC grant EP/R003025/2, and  the third authors by the international internship program “Bolashak” of the Republic of Kazakhstan.}

\date{}

\begin{abstract}
The aim of this paper is the investigation of the existence and uniqueness of solutions to Cauchy-type problems for fractional $q$-difference equations with the bi‐ordinal Hilfer fractional $q$-derivative which is  an extension of the Hilfer fractional $q$-derivative. An approach is based on the equivalence of the nonlinear Cauchy-type problem with a nonlinear Volterra $q$-integral equation of the second kind.  Applying an analog of Banach’s fixed point theorem we prove the uniqueness and the existence of the solution. Moreover, we present an explicit solution to the $q$-analog of the Cauchy problem for the linear case. 
\end{abstract}
 
\subjclass[2010]{34C10, 39A10, 26D15, 	35A01, 26A33.}

\keywords{Hilfer fractional $q$-derivative; Cauchy-type problem;  $q$-derivative; $q$-calculus;  $q$-integral;  unique solvability}

\maketitle

\section{Introduction}
 The quantum groups provide the key to $q$-deforming the basic structures of physics from the point of view of non-commutative geometry. The theory plays an important role in the conformal field theory, exact soluble models in statistical physics \cite{Baxte1982} and in a wide range of applications, from cosmic strings and black holes to the solid state physics problems \cite{Anyons1992}, \cite{BD1999}, \cite{LSN2006}, \cite{Wilczek1990}. For instance, the $q$-deformed Lie algebras are extended versions of the usual Lie algebras   which are beyond the scope of usual Lie algebras \cite{BD1999}. Moreover, it has recently led to  the  application of the $q$-calculus in the construction of generalized statistical mechanics where non-extensivity properties can arise from the $q$-deformed theory \cite{GT2004}.  For some analytical constructions on  quantum groups,  we refer to  \cite{AMR2018}.

Recently, mathematicians have paid much attention to the $q$-calculus \cite{CK2000}, \cite{E2012}, \cite{E2002} and fractional $q$-differential equations \cite{A2014}, \cite{A1969}, \cite{A1966}, \cite{PMS2007}, \cite{SPT2020}, \cite{PMS2009}, \cite{Shaimardan16}. The $q$-calculus (or quantum calculus)  can be dated back to  the beginning of the twentieth century, to F. H. Jackson and R. D. Carmichael's work \cite{C1912}, \cite{J1908}, \cite{J1910}. There have been some papers dealing with the existence and uniqueness or multiplicity of solutions for nonlinear fractional $q$-difference equations by the use of some well-known fixed point theorems \cite{KF1968}. For some recent developments on the subject, see e.g. \cite{AM2012},  \cite{F2011}, \cite{F2010}, \cite{ZCZ2013} and the references therein.

The notations used in this introduction are explained in Section 2 below. In this paper,  we also focus more on the $q$-analogue of Hilfer’s bi-ordinal   fractional derivative or composite fractional derivative operator (see   Definition \ref{definition3.1}) and we derive sufficient conditions for the existence of a unique solution of the  Cauchy-type $q$-fractional problem \eqref{additive3.3}-\eqref{additive3.4}.  Moreover, for the proof of this theorem, we prove an equivalence theorem (Theorem \ref{thm3.4}) of independent interest. In the case when $\mu = 0$ this is the generalized Riemann–Liouville   fractional  $q$-derivative (see \eqref{additive2.1}) and in case when $\mu = 1$ it corresponds to the Caputo fractional $q$-derivative (see \cite{PMS2009}).

The paper is organized as follows: The main results are presented and proved in Section \ref{section3}   and the announced examples are given in Section \ref{section4}. In the final section, we present the explicit solution to the Cauchy problem for the linear case, applying the successive-iteration method. In order to facilitate presentations, we include in Section \ref{section2}  some necessary preliminaries.

\section{Preliminaries}\label{section2}
First, we recall several concepts of the $q$-calculus that we need in the paper. We assume that $0<q<1$.

Let $\alpha \in \mathbb{R}$. Then a $q$-real number $[\alpha ]_q$ is defined by (\cite{CK2000})
$$
[\alpha ]_{q}:=\frac{1-q^{\alpha }}{1-q},
$$
and the $q$-binomial coefficient $[n]_{q}!$ is defined by (\cite{CK2000})
\begin{equation*}
[n]_{q}!:=\left\{
\begin{array}{l}
{1,\mathrm{\;\;\;\;\;\;\;\;\;\;\;\;\;\;\;\;\;\;\;\;\;\;\;\;\;\;\;\;\;\;\;\;
\;if\;}n={0}}, \\
{[1]_{q}\times [2]_{q}\times \cdots \times [n]_{q},
\mathrm{\;if\;}\,n\in \mathbb{N}}.
\end{array}
\right.
\end{equation*}

The $q$-shifted
operations for $n\in\mathbb{N}$ are defined by 
\begin{eqnarray*}
(a;q)_0=1,  \; (a;q)_n=\prod\limits_{k=0}^{n-1}\left(1-q^ka\right),  \; (a;q)_\infty = \lim\limits_{n\rightarrow\infty}(a;q)_n.
\end{eqnarray*}

Moreover,
\begin{eqnarray}\label{additive2.1C} 
(a;q)_\alpha=\frac{(a;q)_\infty}{(q^\alpha a;q)_\infty}.
\end{eqnarray}

The $q$-gamma function $\Gamma_q(x)$ is defined by (\cite{CK2000})
\begin{eqnarray}\label{additive2.1D} 
\Gamma_q(x)=\frac{(q; q)_\infty }{(q^x; q)_\infty }(1-q)^{1-x} 
\end{eqnarray}
for any $x > 0$. This  $q$-Gamma function has the following property (\cite{CK2000}): 
\begin{eqnarray*} 
\Gamma_q(x)[x]_q=\Gamma_q(x+1).
\end{eqnarray*}

The $q$-derivative of a function $f$ with respect to $x$ is defined by (\cite{J1908})
\begin{eqnarray*}
D_{q}f(x)=\frac{f(x)-f(qx)}{x(1-q)},
\end{eqnarray*}
and the $q$-derivatives $D^{n}_q(f(x))$ of higher order are defined inductively as follows (\cite{PMS2007}):
\begin{eqnarray*}
D_q^0(f(x)):=f(x), \;\;\; D^{n}_q(f(x)):=D_q\left(D_q^{n-1}f(x)\right), (n=1,2,3, \dots).
\end{eqnarray*}

The general $q$-integral (or Jackson integral)   is defined    (\cite{J1910}) as
\begin{eqnarray*} 
\int\limits_a^b f(x)d_{q}x=\int\limits_0^b f(x)d_{q}x-
\int\limits_0^a f(x)d_{q}x,
\end{eqnarray*}
for $0<a<b$, where 
\begin{eqnarray*} 
\int\limits_0^a f(x)d_{q}x=(1-q)a\sum\limits_{m=0}^\infty q^{m}f(aq^{m}). 
\end{eqnarray*}

In the middle of the last century, W. A. Al-Salam \cite{A1969} and  R. P. Agarwal \cite{A1966}  presented a $q$-analog of the Riemann–Liouville fractional integral  with only zero as a lower limit of the $q$-integration and a fractional $q$-derivative. After that, P. M. Rajkovic', S. D. Marinkovic' and M. S. Stankovic' \cite{PMS2007} (see also \cite{PMS2009}) allowed the lower limit of the $q$-integration   to be nonzero and reintroduced the Riemann-Liouville $q$-fractional integrals $I^\alpha_{q,a+}f$ of order $\alpha>0$ by 
\begin{eqnarray}\label{additive2.1}
\left(I^\alpha_{q,a+}f\right)(x)=\frac{x^{\alpha-1}}{\Gamma_q(\alpha)}\int\limits_a^x(qt/x;q)_{\alpha-1}f(t)d_qt.
\end{eqnarray}

The Riemann-Liouville fractional $q$-derivative $D^\alpha_{q,a+}f$ of order $\alpha>0$ is defined by 
\begin{eqnarray*} 
\left(D^\alpha_{q,a+}f\right)(x)=\left( D^{[\alpha]}_{q,a+}I^{[\alpha]-\alpha}_{q,a+}f\right)(x),
\end{eqnarray*}
where $[\alpha]$ denotes the smallest integer greater or equal to $\alpha$.

For $\alpha>0$ and $\lambda>-1$ the following identities hold \cite[Lemma 12]{PMS2009}:
\begin{eqnarray}\label{additive2.2}
\left(I^\alpha_{q,a+}t^\lambda(t/a;q)_\lambda\right)(x)=
\frac{\Gamma_q(\lambda+1)}{\Gamma_q(\alpha+\lambda+1)} x^{\alpha+\lambda}(x/a;q)_{\alpha+\lambda} 
\end{eqnarray}
and  
\begin{eqnarray}\label{additive2.3}
\left(D^\alpha_{q,a+}t^\lambda(t/a;q)_\lambda\right)(x)=
\frac{\Gamma_q(\lambda+1)}{\Gamma_q(\lambda+1-\alpha)} x^{\lambda-\alpha}(x/a;q)_{\lambda-\alpha},\;\;\;\alpha\leq\lambda, 
\end{eqnarray}
and 
\begin{eqnarray}\label{additive2.3a}
\left(D^\alpha_{q,a+}t^\lambda(t/a;q)_\lambda\right)(x)=0,\;\;\;\alpha>\lambda. 
\end{eqnarray}
 
For  all further discussions, we assume that the functions are defined on an interval $[0, b]$ and $b>0$. Moreover,  $a \in[0, b]$ is an arbitrary fixed point. In \cite[Subsection 4.3]{AM2012}, M. H. Annaby and Z. S. Mansour   presented definitions of spaces of $q$-integrable, $q$-absolutely continuous on $[0, a]$ functions.  We also give characterizations of those modified spaces $[a, b]$ which will be used later. For $1\leq p<\infty$ we  define the space $L^p_q=L^p_q [a, b]$ by 
\begin{eqnarray*}
L^p_q [a, b] :=\left\{f:[a,b]\rightarrow \mathbb{ R}:\left(\int\limits_a^b |f(x)|^pd_qx\right)^\frac{1}{p}<\infty\right\}.
\end{eqnarray*}

 Let $\alpha>0$, $\beta>0$ and $1\leq p<\infty$. Then the $q$–fractional integrals have the following
semi-group property (see \cite[Lemma 2.3]{SPT2020}):
\begin{eqnarray}\label{additive2.4}
\left(I^\alpha_{q,a+}I^\beta_{q,a+}f\right)(x) =\left(I^{\alpha+\beta}_{q,a+}f\right)(x),
\end{eqnarray}
for all  $x \in[a, b]$ and $f \in L^p_q[a,b]$.

 Let $\alpha>\beta > 0$, $1 \leq p<\infty$ and $f \in L^p_q[a,b]$. Then the following (see  \cite[Lemma 2.3]{SPT2020})
equalities
\begin{eqnarray}\label{additive2.5}
\left(D^\alpha_{q,a+}I^\alpha_{q,a+}\right)(x)=f(x),\;\;\;\left(D^\beta_{q,a+}I^{\alpha }_{q,a+}f\right)(x)=\left(I^{\alpha-\beta}_{q,a+}f\right)(x),
\end{eqnarray}
hold for all  $x\in[a,b]$.
 
 A function  $f:[a,b]\rightarrow \mathbb{ R}$ is called $q$-absolutely continuous
if\, $\exists \varphi\in L^1_q[a,b]$ such that  
\begin{eqnarray*} 
f(x)=f(a)+\int\limits_a^x\varphi(t)d_qt
\end{eqnarray*}
for all $x\in[a,b]$  (see also, \cite[Definition 2.4]{SPT2020}).

The collection of all $q$-absolutely continuous functions on $[a,b]$ is denoted by $AC_q[a,b]$. For $n= {1,2,3,\dots}$ we denote by $AC^n_q[a, b]$ the space of real-valued functions $f(x)$ which have $q$-derivatives up to order $n-1$ on $[a, b]$ such
that $D_q^{n-1}f \in AC_q[a,b]$:
$$
AC^n_q[a, b]:=\left\{f:[a,b]\rightarrow \mathbb{ R}; D_q^{n-1}f \in AC_q[a,b] \right\}.
$$

The function $f:[a,b]\rightarrow \mathbb{ R}$ belongs to $AC^n_q[a, b]$ if the following equality holds (see \cite[Lemma 2.5]{SPT2020}):
\begin{eqnarray*} 
f(x) &=& \frac{x^{n-1}}{\Gamma_q\left(n\right)}\int\limits_a^x(qt/x;q)_{n-1}\varphi(t)d_qt+\sum\limits_{k=0}^{n-1}c_kx^k(a/x;q)_k,
\end{eqnarray*}
where $\varphi(x):=D^{n}_qf(x)$ and $c_k=\frac{D^{k}_qf(a)}{\Gamma_q(k)}, k=0,1,2,\dots, n-1$, are arbitrary constants.

Let $f \in L_q^1[a,b]$ and $ \left(I^{n-\alpha}_{q,a+}f\right) \in AC^n_q[a,b]$ with $n=[\alpha], \alpha>0$. Then the following equality holds (see \cite[Lemma 2.5]{SPT2020}):
\begin{eqnarray}\label{additive2.6}
\left(I^\alpha_{q,a+}D_{q,a+}^\alpha f\right)(x)=f(x)-\sum\limits_{k=1}^n\frac{\left(D^{\alpha-k}_{q,a+}f\right)(a)}{\Gamma_q\left(\alpha-k+1\right)}x^{\alpha-k}(a/x;q)_{\alpha-k}.
\end{eqnarray}

Let $\alpha>0$ and $1\leq p<\infty$.  Then the fractional integration operator $I^\alpha_{q,a+} $ is bounded in
$L^p_q [a, b]$ (see \cite[Lemma 2.6]{SPT2020}):
\begin{eqnarray}\label{additive2.7}
\|I^\alpha_{q,a+}f\|_{L^p_q [a, b]} &\leq& K\|f\|_{L^p_q [a, b]},
\end{eqnarray}
where $K=\frac{(b-qa)_q^\alpha}{ \Gamma_q(\alpha+1)}$ and $(b-qa)_q^\alpha=b^\alpha(qa/b;q)_\alpha$.

\section{Main results}\label{section3}

Now, we start to present the concept of the bi-ordinal fractional $q$-derivative generalizing the well-known Hilfer’s $q$-derivative (see \cite{H2000}, \cite{H2002} and \cite{T2012}). Bi-ordinal Hilfer derivative was introduced for the first time by Bulavatsky in \cite{B2014} in the case when the  fractional order lies between 0 and 1. Later, in \cite{K2021} it was generalized for higher fractional orders. Boundary-value problems for PDEs involving this fractional derivative were considered in recent works \cite{KTR2022} and \cite{T2022}.

Below for the first time, we introduce $q$-analogue of this integral-differential operator together with the key property which allows us to solve the Cauchy problem for the $q$-fractional differential equation.

\begin{definition}\label{definition3.1}  We define the bi-ordinal Hilfer   fractional $q$-derivative      $\mathcal{D}^{(\alpha,\beta)\mu}_{q,a+}$
  of orders $\alpha$ and $\beta$, and  type $\mu$   with respect to $x$ by
\begin{eqnarray}\label{additive3.1}
\left(\mathcal{D}^{(\alpha,\beta)\mu}_{q,a+}f\right)(x) &:=& \left(I_{q,a+}^{\mu(1-\alpha)}D_q\left(I_{q,a+}^{(1-\mu)(1-\beta)}f\right)\right)(x),
\end{eqnarray}
for $0<\alpha , \beta\leq1$ and $0\leq\mu\leq{1}$.
\end{definition}

In the case when $\mu=0$, the  generalized  fractional $q$-derivative   (\ref{additive3.1}) would correspond to the the Riemann-Liouville fractional $q$-derivative of order $\beta$ (see (\ref{additive2.1})) and in the case when $\mu= 1$ it corresponds to the Caputo fractional $q$-derivative $\left(_cD^\alpha _{q,a+}f\right)(x)$ defined by (see \cite{PMS2009}):
$$
\left(_cD^\alpha _{q,a+}f\right)(x) := \left(I_{q,a+}^{1-\alpha}D_qf\right)(x)=\frac{x^{-\alpha}}{\Gamma_q(1-\alpha)}\int\limits_a^x
\left(qt/x;q\right)_{-\alpha}f(t)d_qt.
$$

\begin{rem}\label{rem3.2}  Let $\gamma=\beta+\mu(n-\beta)$ and $\nu=\beta+\mu(\alpha-\beta)$ for  $n-1<\alpha,\beta \leq n, n\in \mathbb{N}$ and $0\leq \mu\leq 1$. The generalized  fractional $q$-derivative   $D_{q,a+}^{(\alpha,\beta)\mu} f$ can be represented as follows:
\begin{eqnarray}\label{additive3.2}
\left(D^{(\alpha,\beta)\mu}_{q,a+}f\right)(x) &:=& \left(I_{q,a+}^{\mu(n-\alpha)}D^n_q\left(I_{q,a+}^{(1-\mu)(n-\beta)}f\right)\right)(x)\nonumber\\
&=&\left(I_{q,a+}^{\gamma-\nu}D^n_q\left(I_{q,a+}^{n-\gamma}f\right)\right)(x)\nonumber\\
&=&\left(I_{q,a+}^{\gamma-\nu}D^{\gamma}_{q,a+} f\right)(x).
\end{eqnarray}
\end{rem}

Next, we study the question of the equivalence between the following Cauchy-type $q$-fractional problem  
\begin{eqnarray}\label{additive3.3}
\left(D^{(\alpha,\beta)\mu}_{q,a+}y\right)(x) &=& f\left(x,y(x)\right),\;\;\;n-1<\alpha\leq n; n\in\mathbb{N}, 0\leq \beta\leq 1,
\end{eqnarray}
\begin{eqnarray}\label{additive3.4}
\lim\limits_{x\rightarrow a+}\left(D^k_qI^{(1-\mu)(n-\beta)}_{q,a+}y \right)(x)=\xi_k,  \;\;\;\xi_k\in\mathbb{R},  k=1,2,\dots n,
\end{eqnarray}
and the Volterra $q$-integral equation of the second kind:
\begin{eqnarray}\label{additive3.5}
y(x)=\sum\limits_{k=1}^n\frac{\xi_k}{\Gamma_q\left(\gamma-k+1\right)}x^{\gamma-k}(a/x;q)_{\gamma-k}+
\left(I^ \nu_{q,a+}f(\cdot,y(\cdot))\right)(x).
\end{eqnarray}

\begin{thm}\label{thm3.3}  We assume  $n-1<\alpha,\beta\leq n, n\in\mathbb{N}$, $0\leq \mu\leq 1$, $\gamma=(n-\alpha)(1-\beta)$, $\nu=\beta+\mu(\alpha-\beta)$.  Let $f(\cdot, \cdot):  [a , b] \times \mathbb{R}\rightarrow \mathbb{R}$ be a function such that $f(\cdot, y(\cdot)) \in L_q^1[a, b]$ for all $y \in L_q^1[a, b]$.Then $y$ satisfies   the relations (\ref{additive3.3})-(\ref{additive3.4}) if and only if\, $y$ satisfies   the integral equation (\ref{additive3.5}).
\end{thm}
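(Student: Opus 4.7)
My plan is to establish the equivalence in both directions by first reducing the bi-ordinal operator to the composition $I^{\gamma-\nu}_{q,a+}D^\gamma_{q,a+}$ via Remark \ref{rem3.2}, and then using the semigroup property \eqref{additive2.4} together with the Riemann--Liouville recovery identity \eqref{additive2.6}.

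For the forward direction $(\ref{additive3.3})$-$(\ref{additive3.4})\Rightarrow(\ref{additive3.5})$, I would rewrite \eqref{additive3.3} via Remark \ref{rem3.2} as $\left(I^{\gamma-\nu}_{q,a+}D^\gamma_{q,a+}y\right)(x)=f(x,y(x))$, apply $I^\nu_{q,a+}$ to both sides and invoke \eqref{additive2.4} to obtain $\left(I^\gamma_{q,a+}D^\gamma_{q,a+}y\right)(x)=\left(I^\nu_{q,a+}f(\cdot,y(\cdot))\right)(x)$. Because $\gamma\in(n-1,n]$ has ceiling $n$, identity \eqref{additive2.6} expresses $y(x)$ as a linear combination of the kernels $x^{\gamma-k}(a/x;q)_{\gamma-k}$ with coefficients $(D^{\gamma-k}_{q,a+}y)(a)/\Gamma_q(\gamma-k+1)$, plus $I^\nu_{q,a+}f$. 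Unfolding $D^{\gamma-k}_{q,a+}=D^{n-k}_q I^{n-\gamma}_{q,a+}$ (with $D^s_{q,a+}=I^{-s}_{q,a+}$ when $s\leq 0$) and using $(1-\mu)(n-\beta)=n-\gamma$, the coefficient values at $x=a$ are exactly the $\xi_k$ supplied by \eqref{additive3.4}, which produces \eqref{additive3.5}.

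For the converse, I would apply $D^{(\alpha,\beta)\mu}_{q,a+}$ to both sides of \eqref{additive3.5}. By \eqref{additive2.3a} each kernel $x^{\gamma-k}(a/x;q)_{\gamma-k}$ with $k\geq 1$ is annihilated by $D^\gamma_{q,a+}$, since $\gamma>\gamma-k$, so only the Volterra term contributes. Using Remark \ref{rem3.2} and \eqref{additive2.4}, I would rewrite $D^{(\alpha,\beta)\mu}_{q,a+}I^\nu_{q,a+}f=I^{\gamma-\nu}_{q,a+}D^n_qI^{n-\gamma+\nu}_{q,a+}f=I^{\gamma-\nu}_{q,a+}D^{\gamma-\nu}_{q,a+}f$, and the correction term in \eqref{additive2.6} at order $\gamma-\nu\in[0,1)$ vanishes because fractional $q$-integrals of $L^1_q$ functions vanish at the lower endpoint $x=a$, leaving $f(x,y(x))$. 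To recover \eqref{additive3.4}, I would apply $D^k_qI^{n-\gamma}_{q,a+}$ to \eqref{additive3.5} and pass to $x\to a+$: using \eqref{additive2.2} the $q$-power kernels become $x^{n-j}(a/x;q)_{n-j}$-type terms, and after the $q$-differentiations exactly one basis element survives evaluation at $x=a$, producing the corresponding $\xi_k$, while $I^\nu_{q,a+}f$ tends to zero.

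The main technical obstacle is the bookkeeping around indices and $q$-shifted factorials: verifying that $D^{\gamma-k}_{q,a+}$ is correctly interpreted when $\gamma-k$ becomes non-positive (for $k$ near $n$), and checking that the composite action of $D^k_qI^{n-\gamma}_{q,a+}$ on the kernel $x^{\gamma-j}(a/x;q)_{\gamma-j}$ yields a Kronecker-delta-type orthogonality at $x=a$ that isolates each $\xi_k$ in the limit.
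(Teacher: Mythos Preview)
Your proposal is correct and follows essentially the same route as the paper: both directions hinge on Remark~\ref{rem3.2} to rewrite $D^{(\alpha,\beta)\mu}_{q,a+}=I^{\gamma-\nu}_{q,a+}D^\gamma_{q,a+}$, the semigroup property \eqref{additive2.4}, the recovery identity \eqref{additive2.6}, the annihilation \eqref{additive2.3a}, and the power rule \eqref{additive2.2} together with the identification $(1-\mu)(n-\beta)=n-\gamma$. The only place you deviate is in verifying $D^{(\alpha,\beta)\mu}_{q,a+}I^\nu_{q,a+}f=f$: the paper writes this formally as $I^{\gamma-\nu}_{q,a+}D^\gamma_{q,a+}I^\nu_{q,a+}f=I^{\gamma-\nu}_{q,a+}I^{\nu-\gamma}_{q,a+}f=f$ via \eqref{additive2.5}, whereas you reduce $D^n_qI^{n-\gamma+\nu}_{q,a+}$ to $D^{\gamma-\nu}_{q,a+}$ and then invoke \eqref{additive2.6} at order $\gamma-\nu\in[0,1)$ with a vanishing boundary term---your version is a touch more careful but amounts to the same computation.
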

\begin{proof}  We assume that  $y\in L^1_q[a, b]$ satisfies   the relations (\ref{additive3.3})-(\ref{additive3.4}). Since $f(x,y(x)) \in L_q^1[a, b]$   we get that   $ \left(D^{(\alpha,\beta)\mu}_{q,a+}y\right)\in L^1_q[a, b]$ for   $\gamma=\beta+\mu(n-\beta)$, $\nu=\beta+\mu(\alpha-\beta)$ together with $n-1 < \alpha,\beta \leq n, n \in \mathbb{N}$, $0 \leq \mu \leq 1$. According to (\ref{additive2.7}) we obtain that  $\left(I^\nu_{q,a+}f\right)\in L^1_q[a,b]$. Applying the integral operator $I^\nu_{q,a+}$ to both sides of (\ref{additive3.3})  we obtain 
\begin{eqnarray}\label{additive3.6}
 I^ \nu_{q,a+}D^{(\alpha,\beta)\mu}_{q,a+}y  (x)=\left(I^ \nu_{q,a+}f(\cdot,y(\cdot))\right)(x).
\end{eqnarray}

From  Remark  \ref{rem3.2} and (\ref{additive2.4}), (\ref{additive2.6}) it follows that
\begin{eqnarray}\label{additive3.7}
\left(I^ \nu_{q,a+}D^{(\alpha,\beta)\mu}_{q,a+}y \right)(x)&=&\left(I^ \nu_{q,a+}I^{\gamma-\nu}_{q,a+}
D^{\gamma}_{q,a+}y \right)(x)\nonumber\\
&=&\left( I^{\gamma}_{q,a+}
D^{\gamma}_{q,a+}y \right)(x)\nonumber\\
&=& y(x) -\sum\limits_{k=1}^n\frac{\left(D^{\gamma-k}_{q,a+}f\right)(a)}{\Gamma_q\left(\gamma-k+1\right)}x^{\gamma-k}(a/x;q)_{\gamma-k}\nonumber\\
&=&y(x) -\sum\limits_{k=1}^n\frac{\left(D^k_qI^{(1-\mu)(n-\beta)}_{q,a+}f\right)(a)}{\Gamma_q\left(\gamma-k+1\right)}x^{\gamma-k} (a/x;q)_{\gamma-k}. 
\end{eqnarray}

According to (\ref{additive3.4}), (\ref{additive3.6}) and (\ref{additive3.7})  we get
the equation (\ref{additive3.5}). The necessity is proved.

Now we prove the sufficiency. Let $y\in L^1_q[a, b]$  satisfy  equation (\ref{additive3.5}). Applying the operator $D^{(\alpha,\beta)\mu}_{q,a+}$  to both sides of (\ref{additive3.5}), we have that
\begin{eqnarray}\label{additive3.8}
\left(D^{(\alpha,\beta)\mu}_{q,a+}y\right)(x)&=&\sum\limits_{k=1}^n\frac{\xi_k}{\Gamma_q\left(\gamma-k+1\right)}
\left(D^{(\alpha,\beta)\mu}_{q,a+}t^{\gamma-k}(a/t;q)_{\gamma-k}\right)(x)\nonumber\\
&+& D^{(\alpha,\beta)\mu}_{q,a+}\left(I^ \nu_{q,a+}f(\cdot,y(\cdot))\right)(x).
\end{eqnarray}

From (\ref{additive2.3a})  it follows  that
\begin{eqnarray}\label{additive3.9}
\left(D^{\gamma}_{q,a+}t^{\gamma-k}(a/t;q)_{\gamma-k}\right)(x)=0,\;\;\;1\leq k\leq n.  
\end{eqnarray}

By  Remark \ref{rem3.2} and (\ref{additive2.4}), (\ref{additive2.5})  and (\ref{additive3.9}), we find that
\begin{eqnarray*}
\left(D^{(\alpha,\beta)\mu}_{q,a+}t^{k-\gamma}(a/t;q)_{k-\gamma}\right)(x)&=&\left(I^{\gamma-\nu}_{q,a+}
D^{\gamma}_{q,a+}t^{\gamma-k}(a/t;q)_{\gamma-k}\right)(x)=0
\end{eqnarray*}
and  using (\ref{additive3.2}) we get that
\begin{eqnarray*}
\left(D^{(\alpha,\beta)\mu}_{q,a+}I^\nu_{q,a+}f\right)(x)&=&\left( I_{q,a+}^{\gamma-\nu}D^{\gamma}_qI^\nu_{q,a+}f(\cdot,y(\cdot))\right)(x)\\
&=&\left(I_{q,a+}^{\gamma-\nu}I^{\nu-\gamma}_{q,a+}f(\cdot,y(\cdot))\right)(x)\\
&=&f(x,y(x)).
\end{eqnarray*}

Therefore,  we can rewrite (\ref{additive3.8}) in the form 
\begin{eqnarray*}
\left(D^{(\alpha,\beta)\mu}_{q,a+}y\right)(x)&=& f(x,y(x)).
\end{eqnarray*}

Finally, we will show that the initial condition of (\ref{additive3.4}) also holds. For this aim, we apply the operator $I^{(1-\mu)(n-\beta)}_{q,a+}$ to both sides of (\ref{additive3.5}) and using  (\ref{additive2.2}), (\ref{additive2.4}), we conclude that
\begin{eqnarray}\label{additive3.10}
\left(I^{(1-\mu)(n-\beta)}_{q,a+}y\right)(x)&=&\sum\limits_{k=1}^n\frac{\xi_k}{\Gamma_q\left(\gamma-k+1\right)}
\left(I^{(1-\mu)(n-\beta)}_{q,a+}t^{\gamma-k}(a/t;q)_{\gamma-k}\right)(x)\nonumber\\
&+& \left[I^{(1-\mu)(n-\beta)}_{q,a+}I^\nu_{q,a+}f(\cdot,y(\cdot))\right](x)\nonumber\\
&=&\sum\limits_{k=1}^n\frac{\xi_k}{[n-k]_q!} x^{n-k}(a/x;q)_{n-k}+ \left(I^{\mu(\alpha-n)-n}_{q,a+}f(\cdot,y(\cdot))\right)(x) .
\end{eqnarray}

Let  $0\leq m\leq n-1$. Then, we apply the operator $D^m_q$ to both sides of (\ref{additive3.10}) by using   (\ref{additive2.3}) and (\ref{additive2.6}),   to obtain  that
\begin{eqnarray}\label{additive3.11}
\left(D^m_qI^{(1-\mu)(n-\beta)}_{q,a+}y\right)(x)&=&\sum\limits_{k=1}^n\frac{\xi_k}{[n-k]_q!} D^m_q\left[x^{n-k}(a/x;q)_{n-k}\right]+\left( D^m_qI^{\mu(\alpha-n)-n}_{q,a+}f(\cdot,y(\cdot))\right)(x)\nonumber\\
&=&\sum\limits_{k=1}^n\frac{\xi_k}{[n-k-m]_q!}  x^{n-k-m}(a/x;q)_{n-k-m}\nonumber\\
&+&\left(I^{\mu(\alpha-n)-n-m}_{q,a+}f(\cdot,y(\cdot))\right)(x).
\end{eqnarray}

Taking in (\ref{additive3.11}) the limit  $ x \rightarrow a+$, we get the relations in (\ref{additive3.4}). Thus also the sufficiency is proved, which completes the
proof of Theorem \ref{thm3.3}.  
\end{proof}

\vspace{0,3cm}
In the next theorem,  we give conditions for a unique global solution to the Cauchy-type
problem (\ref{additive3.3})-(\ref{additive3.4}) in the space $L^1_{\alpha,\beta,\mu, q}[a,b]$ defined for $\alpha>0$ by
\begin{eqnarray*}
L^1_{\alpha,\beta,\mu, q}[a,b]:=\left\{y\in L_q^1[a, b]: D^{(\alpha,\beta)\mu}_{q,a+}y \in L_q^1[a, b]\right\}.
\end{eqnarray*}

The proof of the following existence and uniqueness theorem depends heavily on Theorem \ref{thm3.3} and the
Banach's fixed point Theorem (see  \cite[Subsection 8.1]{KF1968}).

\begin{thm}\label{thm3.4} We assume $\gamma=\beta+\mu(n-\beta)$ and $\nu=\beta+\mu(\alpha-\beta)$ for  $n-1<\alpha,\beta \leq n, n\in \mathbb{N}$ and $0\leq \mu\leq 1$. Let $n-1<\alpha\leq n; n\in\mathbb{N}$, and let     $f:
[a,b]\times \mathbb{R} \rightarrow \mathbb{R}$ be a function such that $f(\cdot,v(\cdot))\in L_q^1[a,b]$ for all $v\in L_q^1[a,b]$,  and it
satisfies the Lipschitz condition  in the following form:
\begin{eqnarray}\label{additive3.12}
\left|f(x,v_1(x))-f(x,v_2(x))\right|\leq A\left|v_1(x)-v_2(x)\right|,
\end{eqnarray}
where $A>0$ does not depend on $x \in[a,b]$ and $v_1,v_2\in L_q^1[a,b]$.   Then there exists a unique solution $y\in L^1_{\alpha,\beta,\mu, q}[a,b]$ to the Cauchy-type problem (\ref{additive3.3})-(\ref{additive3.4}).
\end{thm}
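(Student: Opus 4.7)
The plan is to exploit the equivalence established in Theorem~\ref{thm3.3} and reduce the problem to finding a unique fixed point of the nonlinear Volterra operator
\[
(Ty)(x):=\sum_{k=1}^n\frac{\xi_k}{\Gamma_q(\gamma-k+1)}x^{\gamma-k}(a/x;q)_{\gamma-k}+\left(I^\nu_{q,a+}f(\cdot,y(\cdot))\right)(x).
\]
By Theorem~\ref{thm3.3}, a function $y\in L_q^1[a,b]$ solves the Cauchy problem \eqref{additive3.3}--\eqref{additive3.4} if and only if $y=Ty$. Hence it suffices to prove that $T$ has a unique fixed point in $L_q^1[a,b]$ and then check the additional regularity $D^{(\alpha,\beta)\mu}_{q,a+}y\in L_q^1[a,b]$ required for membership in $L^1_{\alpha,\beta,\mu,q}[a,b]$.

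The first step is to verify that $T$ maps $L_q^1[a,b]$ into itself. The explicit finite sum is a fixed linear combination of $q$-power functions $x^{\gamma-k}(a/x;q)_{\gamma-k}$, each of which lies in $L_q^1[a,b]$; the integral part belongs to $L_q^1[a,b]$ by the boundedness estimate \eqref{additive2.7} applied to the assumption $f(\cdot,y(\cdot))\in L_q^1[a,b]$. Next, combining the Lipschitz hypothesis \eqref{additive3.12} with \eqref{additive2.7} yields, for any subinterval $[c,d]\subseteq[a,b]$ and any $y_1,y_2\in L_q^1[c,d]$,
\[
\|Ty_1-Ty_2\|_{L_q^1[c,d]}\le \frac{A\,(d-qc)_q^\nu}{\Gamma_q(\nu+1)}\,\|y_1-y_2\|_{L_q^1[c,d]}.
\]
This is the basic contractive bound that will drive the fixed-point argument.

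Since the constant $A(b-qa)_q^\nu/\Gamma_q(\nu+1)$ is not automatically smaller than one, I would choose a partition $a=x_0<x_1<\cdots<x_N=b$ fine enough so that $A(x_{j+1}-qx_j)_q^\nu/\Gamma_q(\nu+1)<1$ for every $j$, and then apply Banach's contraction principle successively on the complete metric spaces $L_q^1[x_j,x_{j+1}]$: on $[a,x_1]$ directly to obtain a unique $y$, and on each subsequent $[x_j,x_{j+1}]$ after rewriting $T$ so that the contribution of the already determined piece of $y$ on $[a,x_j]$ is absorbed into a known $L_q^1$-function playing the role of new "initial data". Concatenating these pieces produces a unique global fixed point $y\in L_q^1[a,b]$.

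The main obstacle I anticipate is precisely this inductive step: one must split the Jackson integral defining $I^\nu_{q,a+}$ at each node $x_j$ into a fixed "past" contribution plus a "local" operator of the same form based at $x_j$, and check that the new local operator still enjoys the contraction constant $A(x_{j+1}-qx_j)_q^\nu/\Gamma_q(\nu+1)$. The $q$-kernel $t^{\nu-1}(qs/t;q)_{\nu-1}$ depends on $t$ in a non-translation-invariant way, which makes this bookkeeping more delicate than in the classical case. Once the global $y$ is constructed, the identity $y=Ty$ together with Theorem~\ref{thm3.3} gives $D^{(\alpha,\beta)\mu}_{q,a+}y=f(\cdot,y(\cdot))\in L_q^1[a,b]$, so $y\in L^1_{\alpha,\beta,\mu,q}[a,b]$, which completes the proof.
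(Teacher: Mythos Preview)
Your proposal is correct and follows essentially the same route as the paper's own proof: reduce via Theorem~\ref{thm3.3} to the Volterra equation, write it as $y=Ty$, obtain the Lipschitz/contraction estimate from \eqref{additive2.7} and \eqref{additive3.12}, choose a fine partition of $[a,b]$ so that the contraction constant on each subinterval is strictly less than one, apply Banach's fixed point theorem piece by piece (absorbing the already-determined part of the solution into the free term at each step), and finally deduce $D^{(\alpha,\beta)\mu}_{q,a+}y=f(\cdot,y(\cdot))\in L_q^1[a,b]$. The only minor difference is that the paper justifies the last regularity step by passing to the limit in the successive approximations, whereas your direct appeal to the identity $y=Ty$ together with Theorem~\ref{thm3.3} is in fact cleaner.
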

\begin{proof} First we show that there exists a unique solution $y \in L_q^1[a, b]$. In view of  Theorem \ref{thm3.3}  it is sufficient to prove the existence of a unique solution $y \in L_q^1[a, b]$ of the nonlinear Volterra $q$-integral equation (\ref{additive3.5}). We assume that $[a,a_1] \subset [a,b]$  are such that
\begin{eqnarray}\label{additive3.13}
\omega_1:=AK\frac{\left(a_1-qa\right)_q^\alpha}{\Gamma_q\left(\alpha+1\right)}<1,
\end{eqnarray}
where $K$ is as in (\ref{additive2.7}). Obviously, the equation (\ref{additive3.5}) holds on the interval $[a,a_1]$. Consequently, we rewrite this equation in the form
  $y = Ty$ with 
\begin{eqnarray}\label{additive3.14}
\left(Ty\right)(x):=y_0+\left(I^\nu_{q,a+}f(\cdot,y(\cdot))\right)(x),
\end{eqnarray}
where $y_0:=\sum\limits_{k=1}^n\frac{\xi_k}{\Gamma_q\left(\gamma-k+1\right)}x^{\gamma-k}(a/x;q)_{\gamma-k}$.

If $y\in L^1_q[a,a_1]$, then $Ty\in L^1_q[a,a_1]$.  Moreover, for any  $y_1,y_2\in L^1_q[a,a_1]$, $f \left(\cdot,  y_j(\cdot)\right) \in L^1_q[a,a_1], \;j=1,2$, and using (\ref{additive2.7}), (\ref{additive3.12}) and (\ref{additive3.13}) we have that 
\begin{eqnarray*}
\| Ty_1-Ty_2\|_{L_q^1[a,a_1]}&=& \|I^\nu_{q,a+}\left[f(t,y_1(t))-f(t,y_2(t))\right]\|_{L_q^1[a,a_1]}\\
&\leq& A\|I^\nu_{q,a+} \left[y_1-y_2\right|\|_{L_q^1[a,a_1]}\\
&\leq& \omega_1\| y_1-y_2 \|_{L_q^1[a,a_1]},
\end{eqnarray*}
which completes the proof of our first step.   The space $L_q^1[a,a_1]$ is the Banach space and according to the Banach's fixed point Theorem (see  \cite[Subsection 8.1]{KF1968}), there exists a unique solution $\widetilde{y}\in L_q^1[a,a_1]$ such that $T\widetilde{y}=\widetilde{y}'$ for $t\in[a,a_1]$.

Therefore, the solution $v\in L_q^1[a,a_1]$ is obtained from a limit of the convergent sequence $\left\{\left(T^i\widetilde{y}_0\right)\right\}$:
\begin{eqnarray}
\lim\limits_{i\rightarrow0}\|T^i\widetilde{y}_0-\widetilde{y}\|_{L_q^1[a,a_1]}=0,
\end{eqnarray}
where $ y_0$ is any function in $  L_q^1[a,a_1]$.

If at least one $\xi_k\neq0$ in  the initial condition (\ref{additive3.4}), then we deduce that  $\widetilde{y}_0=y_0$.  From (\ref{additive3.14}), the sequence $\left\{\left(T^i\widetilde{y}_0\right)(x)\right\}$ can be written as 
$$
\left(T^i\widetilde{y}_0\right)(x)=y_0+\left[I^\alpha_{q,a+}f(\cdot,T^{i-1}\widetilde{y}_0(\cdot))\right](x),
$$
for $i\in\mathbb{N}$. Let $y_i =T^i\widetilde{y}_0$. If we write 
$y_i(x)=y_0+\left(I^\alpha_{q,a+}f(\cdot,y_{i-1} (\cdot))\right)(x)$, then it is clear that
\begin{eqnarray*}
\lim\limits_{i\rightarrow0}\|y_i-\widetilde{y}\|_{L_q^1[a,a_1]}=0.
\end{eqnarray*}

Hence, we actually used the method of successive approximations to get a unique solution $y'$ to the integral
equation (\ref{additive3.5}) on $[a,a_1]$. 

Next, we consider the interval $[a_1, a_2]$ and $a_2= a_1 + h_1$,   $a_2<b$,  such that
\begin{eqnarray*} 
\omega_2:=AK\frac{\left(a_2-qa_1\right)_q^\alpha}{\Gamma_q\left(\alpha+1\right)}<1,
\end{eqnarray*}
for $ h_1>0$. Then (\ref{additive3.5}) can be rewritten as   
\begin{eqnarray*} 
y(x)=y_0(x)+ \left(I^\alpha_{q,a+}f(\cdot,y(\cdot))\right)(a_1)+\left(I^\alpha_{q,a_1+}f(\cdot,y(\cdot))\right)(x).
\end{eqnarray*}

Since the function $y$ is uniquely given on   $[a, a_1]$, the first integral can be considered as the known function,
and we can rewrite the last equation as
\begin{eqnarray*}
\left(Ty\right)(x):=y_{10}+\left(I^\alpha_{q,a_1+}f(\cdot,y(\cdot))\right)(x),
\end{eqnarray*}
where $y_{10}:=y_0(x)+ \left(I^\alpha_{q,a+}f(\cdot,y(\cdot))\right)(a_1)$. We note that $y_{10}\in L^1_q[a_1, a_2]$.

By using the same arguments as above, we derive that there exists a unique solution $y' \in L^1_q[a_1, a_2]$ to the equation (\ref{additive3.5}) on the interval $[a_1, a_2]$. We get the next interval $[a_2, a_3]$, where $a_3 = a_2 + h_2$ such that $ a_3 <b$ ($h_2 > 0$) and repeating the above process, we obtain a unique solution  $y' \in L^1_q[a, b]$  to the equation (\ref{additive3.5})  and hence to the Cauchy-type problem (\ref{additive3.3})-(\ref{additive3.4}).

Finally,    we must show that such a  solution $y\in  L^1_{\alpha,\beta,\mu, q}[a,b]$ is unique. According to the above proof, the solution $y\in L^1_q[a, b]$ is a limit of the sequence $\left\{y_i\right\}\in L^1_q[a, b]$:
\begin{eqnarray}\label{additive3.18}
\lim\limits_{i\rightarrow\infty} \|y_i-y\|_{L^1_q[a, b]}=0,
\end{eqnarray}
with the choice of certain $y_i$ on each $[a, a_1], [a_1, a_2], \dots , [a_{L-1}, b]$,  for $L\in\mathbb{N}$. Since
\begin{eqnarray*}
\|D^{\alpha,\beta}_{q,a+}y_i-D^{(\alpha,\beta)\mu}_{q,a+}y\|_{L^1_q[a, b]}&=&\|f\left(\cdot,y_i(\cdot)\right)-f\left(\cdot,y(\cdot)\right)\|_{L^1_q[a, b]}\\
&\leq& A
\| y_i- y\|_{L^1_q[a, b]},
\end{eqnarray*}
by (\ref{additive3.18}), we get that
\begin{eqnarray*}
\lim\limits_{i\rightarrow\infty}\|D^{(\alpha,\beta)\mu}_{q,a+}y_i-D^{(\alpha,\beta)\mu}_{q,a+}y\|_{L^1_q[a, b]}=0,
\end{eqnarray*}
and hence $D^{\alpha,\beta}_{q,a+}\in L^1_q[a, b]$. This completes the proof.
\end{proof}

\section{Miscellaneous Examples}\label{section4}

In Section \ref{section3},  we have introduced sufficient conditions for the Cauchy-type problem  to have a unique solution in some subspaces of the space of $q$-integrable functions.  In fact, in the classical case, these conditions show that the $q$-difference equations of fractional order also can have integrable solutions, provided that their right-hand sides are $q$-integrable. However,  these conditions are not sufficient. Below we present two examples and discuss these examples in connection with the results obtained in Section \ref{section3}  \cite[See examples 3.1-3.2 for fractional cases]{KST2006}.

Let $\gamma=\beta+\mu(n-\beta)$ and $\nu=\beta+\mu(\alpha-\beta)$ for  $n-1<\alpha,\beta \leq n, n\in \mathbb{N}$ and $0\leq \mu\leq 1$.

\begin{example}   We assume that $I^\gamma_{q,qa+}y \in AC_q^n[qa, b]$ and $\lambda, \delta \in \mathbb{R}^+$. Then the $q$-differential equation
\begin{eqnarray}\label{additive4.1}
\left(D^{(\alpha,\beta)\mu}_{q,qa+}\right)(x)&=&  \lambda \frac{(x-q^{-\nu-\delta+1}a)_q^{\nu+\delta}}{(x-q^{-2\nu-\delta+1}a)_q^\nu}  y^2(x) ,
\end{eqnarray}
with the initial condition
\begin{eqnarray}\label{additive4.2}
\lim\limits_{x\rightarrow qa+}\left(D^k_qI^{(1-\mu)(n-\beta)}_{q,qa+}y \right)(x)=0
\end{eqnarray}
has a unique solution in the following form: 
\begin{eqnarray}\label{additive4.3}
y(x)=\frac{1}{\lambda} \frac{\Gamma_q\left(1-\alpha-\gamma\right)}{\Gamma_q\left(1-2\alpha-\gamma\right)}
\left(x- qa\right)^{-\nu-\delta}_q,
\end{eqnarray}
for $-2\nu-\delta+1>0$, where $\left(x- a\right)^\alpha_q=x^\alpha(a/x;q)_\alpha$.

Indeed,  we consider
 $$
 f(x,y(x)):=\lambda   \frac{(x-q^{-\alpha-\delta+1}a)_q^{\alpha+\delta}}{(x-q^{-2\alpha-\delta+1}a)_q^\alpha}  y^2(x),
 $$
and assume that $I^\gamma_{q,a+}y \in AC_q^n[qa, b]$ and  $G:=\left\{qa\leq x\leq b;  \left|y(x)\right|<M<\infty\right\}$. For $-2\nu-\delta+1>0$,
$x\in[qa,b]$ and $y_1,y_2 \in G$,
\begin{eqnarray*}
\left|f(x,y_1(x))-f(x,y_2(x))\right|&=& \lambda \left|\frac{(x-q^{-\nu-\delta}a)_q^{\nu+\delta}}{(x-q^{-2\nu-\delta+1}a)_q^\nu}  y^2_1(x)\right.\\
&-&\left.\frac{(x-q^{-\nu-\delta})_q^{\nu+\delta}}{(x-q^{-2\nu-\delta+1}a)_q^\nu}  y^2_2(x) \right|\\
&\leq&\lambda\frac{(b-q^{-\nu-\delta+1}a)_q^{\nu+\delta}}{(a-q^{-2\nu-\delta+1}a)_q^\nu} \left|y_1 (x)+y_2(x)\right|\left|y_1 (x)-  y_2(x)\right|\\
&\leq&\lambda\frac{(b-q^{-\nu-\delta+1}a)_q^{\nu+\delta}}{(a-q^{-2\nu-\delta+1}a)_q^\nu}2M\left|y_1 (x)-  y_2(x)\right|,
\end{eqnarray*}
which proves that $f$ satisfies the Lipschitz condition (\ref{additive3.12}) in Theorem \ref{thm3.4}.

Therefore,
\begin{eqnarray*}
\int\limits_{qa}^b\left|f(x,y (x))\right|d_qx&=& \lambda \int\limits_{qa }^b\frac{(x-q^{-\nu-\delta+1}a)_q^{\nu+\delta}}{(x-q^{-2\nu-\delta+1}a)_q^\nu}\left|y(x)\right|^2 d_qx\\
&\leq& \lambda \frac{(b-q^{-\nu-\delta+1}a)_q^{\nu+\delta}}{(a-q^{-2\nu-\delta+1}a)_q^\nu} M^2,
\end{eqnarray*}
which means that $f(x,y (x))\in L_q^1[qa,b]$. We can conclude that, according to Theorem \ref{thm3.3}, the equation has a unique solution in $L_q^1[qa,b]$. Moreover, by applying the operator $D^{(\alpha,\beta)\mu}_{q,qa+}$ (see \ref{additive3.1})and using the Remark \ref{rem3.2} and  (\ref{additive2.2}), (\ref{additive2.3}), we get that
\begin{eqnarray*}
D^{(\alpha,\beta)\mu}_{q,qa+} \left(x- qa\right)^{-\nu-\delta}_q &=&  I_{q,qa+}^{\gamma-\nu}D^{\gamma}_{q,qa+}(x-qa)_q^{-\nu-\delta} \\
&=&\frac{\Gamma_q(1-\nu-\delta)}{\Gamma_q{(1-\nu-\delta-\gamma)}} \left[I_{q,qa+}^{\gamma-\nu}
 (x-qa)_q^{-\nu-\delta-\gamma} \right]\\
&=&  \frac{\Gamma_q(1-\nu-\delta )}{\Gamma_q(1-\delta-2\nu)}
 (x-qa)_q^{-2\nu-\delta}  
\end{eqnarray*}
for $2\nu+\delta<1$. Hence,
\begin{eqnarray}\label{additive4.4}
D^{(\alpha,\beta)\mu}_{q,qa+} \left[y(x)\right]
=\frac{1}{\lambda}\left[\frac{\Gamma_q(1-\nu-\delta )}{\Gamma_q(1-\delta-2\nu)}\right]^2
\left(x- qa\right)^{-2\nu-\delta}_q.
\end{eqnarray}

By using the well-known formulas (see  \cite[formula (3.7)]{CK2000}):  
$$
(x-q^{-\nu-\delta+1}a)_q^{\nu+\delta}(x-qa)_q^{-\nu-\delta}=1
$$
and
$$
 (x-qa)_q^{-\nu-\delta}=(x-qa)_q^{-2\nu-\delta}(x-q^{-2\nu-\delta}a)_q^\nu,
 $$
 we have that
\begin{eqnarray}\label{additive4.5}
 f(x,y(x))&=&\lambda   \frac{(x-q^{-\nu-\delta+1}a)_q^{\nu+\delta}}{(x-q^{-2\nu-\delta+1}a)_q^\nu}  y^2(x)\nonumber\\
&=&\frac{1}{\lambda}\left[\frac{\Gamma_q\left(1-\nu-\delta\right)}{\Gamma_q\left(1-2\nu-\delta\right)}\right]^2
\times\frac{(x-q^{-\nu-\delta+1}a)_q^{\nu+\delta}}{(x-q^{-2\alpha-\gamma+1}a)_q^\alpha}
\left[(x-qa)_q^{-\nu-\delta}\right]^2\nonumber\\
&=&\frac{1}{\lambda}\left[\frac{\Gamma_q\left(1-\nu-\delta\right)}{\Gamma_q\left(1-2\nu-\delta\right)}\right]^2
 (x-qa)_q^{-2\nu-\delta}=D^{(\alpha,\beta)\mu}_{q,qa+}\left[y(x)\right].
\end{eqnarray}

Now, by combining (\ref{additive4.4}) and using   (\ref{additive4.5}) and  $\lim\limits_{x\rightarrow qa}\left(x- qa\right)^{-\nu-\delta}_q=0$ for $x\in[qa,b]$,   we see that $y(x)$ defined by (\ref{additive4.3}) satisfies (\ref{additive4.1})-(\ref{additive4.2}). The proof of our claim is complete.

\end{example}

\begin{example} Let $\delta, \lambda\in\mathbb{R}^+$.  Then the  $q$-differential equation 
\begin{eqnarray}\label{additive4.6}
\left(D^{(\alpha,\beta)\mu}_{q,a+}y\right)(x) &=&  \lambda  \left[\frac{(x-qa )_q^{2\nu+2\delta}}
{(x-q^{\nu+2\delta+1}a )_q^\nu}\right]^\frac{1}{2}\left[y(x)\right]^\frac{1}{2},
\end{eqnarray}
with the initial condition
\begin{eqnarray}\label{additive4.7}
\lim\limits_{x\rightarrow qa+}\left(D^k_qI^{(1-\mu)(n-\beta)}_{q,qa+}y \right)(x)&=&0
\end{eqnarray}
has a unique solution in the following form:
\begin{eqnarray}\label{additive4.8}
y(x)=\left[\lambda  \frac{\Gamma_q\left(\nu+2\delta+1\right)}{\Gamma_q\left(2\nu+2\delta+1\right)}
\right]^2\left(x-qa\right)^{2\nu+2\delta}_q.
\end{eqnarray}

Since  $\left(x-qa\right)^{2\nu+2\delta}_q=\left(x-qa\right)^{\nu+2\delta}_q\left(x-q^{\nu+2\delta}a\right)^\nu_q$ we get 
\begin{eqnarray}\label{additive4.9}
f(x,y(x))&:=& \lambda   \frac{\left[(x-qa )_q^{2\nu+2\delta}\right]^\frac{1}{2}}
{(x-q^{\nu+2\delta+1}a )_q^\nu} \left[y(x)\right]^\frac{1}{2}\nonumber\\
&=&\lambda^2   \frac{\Gamma_q\left(\nu+2\delta+1\right)}{\Gamma_q\left(2\nu+2\delta+1\right)}\left[\frac{(x-qa )_q^{2\nu+2\delta}}
{(x-q^{\nu+2\delta+1}a )_q^\nu}\right]^\frac{1}{2}
\left[\left(x-qa\right)^{2\nu+2\delta}_q\right]^\frac{1}{2}\nonumber\\
&=&\lambda^2   \frac{\Gamma_q\left(\nu+2\delta+1\right)}{\Gamma_q\left(2\nu+2\delta+1\right)} \left(x-qa\right)^{\nu+2\delta}_q. 
\end{eqnarray}

We assume that $I^\gamma_{q,a+}y \in AC_q^n[a, b]$ and 
$  G:=\left\{a\leq x\leq b;  M_1< y(x) <M_2,   0<M_1,M_2\right\}$. Then, for $x\in[a, b]$ and $y_1, y_2\in G$ we obtain
\begin{eqnarray*}
|f(x,y_1(x))-f(x,y_2(x))|&\leq&\lambda\frac{\left[(b-qa )_q^{2\nu+2\delta}\right]^\frac{1}{2}}
{(a-q^{\nu+2\delta+1}a )_q^\nu }\left|y^\frac{1}{2}_1(x)-y^\frac{1}{2}_2(x)\right|\\
&=&\lambda\frac{\left[(b-qa )_q^{2\nu+2\delta}\right]^\frac{1}{2}}
{(a-q^{\nu+2\delta+1}a )_q^\nu }\frac{\left|y_1(x)-y_2(x)\right|}{y^\frac{1}{2}_1(x)+y^\frac{1}{2}_2(x)}\\
&\leq&\frac{\lambda}{2M_1}\frac{\left[(b-qa )_q^{2\nu+2\delta}\right]^\frac{1}{2}}
{(a-q^{\nu+2\delta+1}a )_q^\nu}\left|y_1(x)-y_2(x)\right|,
\end{eqnarray*}
which proves that $f$ satisfies the Lipschitz condition (\ref{additive3.6}) in Theorem \ref{thm3.4}.

Moreover,
\begin{eqnarray*}
\int\limits_{qa}^b\left|f(x,y (x))\right|d_qx
 \leq  \lambda \frac{\left[(b-qa )_q^{2\nu+2\delta}\right]^\frac{1}{2}}
{(a-q^{\nu+2\delta+1}a )_q^\delta}  M^\frac{1}{2},
\end{eqnarray*}
which means that $f(x,y (x))\in L_q^1[qa,b]$. We can conclude that according to the Theorem \ref{thm3.3}, the equation has a unique solution in $L_{q}^1[qa,b]$.  Moreover, by applying the operator $D^{(\alpha,\beta)\mu}_{q,qa+}$ (see \ref{additive3.1}) to (\ref{additive4.8})  and using   Remark \ref{rem3.2} and  (\ref{additive2.2}), (\ref{additive2.3}), we get that
\begin{eqnarray*}
D^{(\alpha,\beta)\mu}_{q,qa+} \left(x- qa\right)^{2\nu+2\delta}_q &=&  I_{q,qa+}^{\gamma-\nu}D^{\gamma}_{q,qa+}(x-qa)_q^{2\nu+2\delta} \\
&=&\frac{\Gamma_q(1+2\nu+2\delta)}{\Gamma_q{(1+2\nu+2\delta-\gamma)}} \left[I_{q,qa+}^{\gamma-\nu}
 (x-qa)_q^{2\nu+2\delta-\gamma} \right]\\
&=&\frac{\Gamma_q(1+2\nu+2\delta)}{\Gamma_q{(1+\nu+2\delta)}}
(x-qa)_q^{\nu+2\delta }  
\end{eqnarray*}
for $2\nu+\delta<1$. Hence, by \ref{additive4.9} 
\begin{eqnarray*} 
D^{(\alpha,\beta)\mu}_{q,qa+} \left[y(x)\right]
=  \lambda^2 \frac{\Gamma_q(1+2\nu+2\delta)}{\Gamma_q{(1+\nu+2\delta)}}
(x-qa)_q^{\nu+2\delta }=f(x,y(x)). 
\end{eqnarray*}

Now, by combining (\ref{additive4.4}) and using that  (\ref{additive4.5}) and  $\lim\limits_{x\rightarrow qa}\left(x- qa\right)^{-\nu-\delta}_q=0$ for $x\in[qa,b]$,   we see that $y(x)$ defined by (\ref{additive4.3}) satisfies (\ref{additive4.6})-(\ref{additive4.7}). The proof of our statement is complete.
\end{example}

\section{A linear Cauchy-type $q$-fractional problem}

Let  $\gamma=\beta+\mu(n-\beta)$ and $\nu=\beta+\mu(\alpha-\beta)$ for  $n-1<\alpha,\beta \leq n, n\in \mathbb{N}$ and $0\leq \mu\leq 1$.

The (Mittag-Leffler) $q$-function $E_{\alpha,\beta}\left(z;q\right)$ is defined by (\cite{SPT2020})
\begin{eqnarray}\label{additive5.1}
E_{\alpha,\beta}\left[\lambda x^{\gamma}(a/x;q)_{\gamma};q\right]=
\sum\limits_{k=0}^{\infty}\frac{\lambda^{k}x^{k\gamma}(a/x;q)_{k\gamma}}{\Gamma_q(\alpha k+\beta)}. 
\end{eqnarray}

We study the following Cauchy type $q$-fractional problem  
\begin{eqnarray}\label{additive5.2}
\left(D^{(\alpha,\beta)\mu}_{q,a+}y\right)(x) -\lambda y(x)&=&f(x), 
\end{eqnarray}
\begin{eqnarray}\label{additive5.3}
\lim\limits_{x\rightarrow a+}\left(D^k_qI^{(1-\mu)(n-\beta)}_{q,a+}y \right)(x)=\xi_k,  \;\;\;\xi_k\in\mathbb{R},  k=0,1,2,\dots n-1.
\end{eqnarray}

\begin{thm}\label{thm5.1}
Let  $\gamma=\beta+\mu(n-\beta)$ and $\nu=\beta+\mu(\alpha-\beta)$ for  $n-1<\alpha,\beta \leq n, n\in \mathbb{N}$ and $0\leq \mu\leq 1$. We assume that  $\lambda \in \mathbb{R}$ is  such that
\begin{eqnarray*}
|\lambda|b^{\nu}(1-q)^{\nu}<1,
\end{eqnarray*}
and $f\in L^1_q[a,b]$. Then the Cauchy problem   (\ref{additive5.2})-(\ref{additive5.3}) has a unique solution
$y \in L^1_{\alpha,\beta,\mu, q}[a,b]$ and  this solution is given by

\begin{eqnarray}\label{additive5.4}
y(x)&=&\sum\limits_{k=1}^{n} b_k x^{\nu-k}E_{\nu,\nu-k+1}\left[\lambda x^\nu;q\right] \nonumber\\
&+&\int\limits_0^{x} x^{\nu-1}\left(qt/x;q\right)_{\nu-1}   E_{\nu,\nu}\left[\lambda{x^\nu}(q^\nu{t}/x;q)_\nu;q\right] f(t)d_qt.
\end{eqnarray}
\end{thm}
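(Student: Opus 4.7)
The approach is to apply Theorem \ref{thm3.3} to reduce the linear Cauchy problem \eqref{additive5.2}-\eqref{additive5.3} to an equivalent Volterra $q$-integral equation, use the contraction argument behind Theorem \ref{thm3.4} for existence and uniqueness, and then sum the successive-approximation series explicitly, identifying the sums with Mittag-Leffler $q$-functions via \eqref{additive5.1}.

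By Theorem \ref{thm3.3} the problem is equivalent to $y(x)=y_0(x)+\lambda(I^\nu_{q,a+}y)(x)+(I^\nu_{q,a+}f)(x)$, with $y_0(x):=\sum_{k=1}^n\frac{\xi_k}{\Gamma_q(\gamma-k+1)}\,x^{\gamma-k}(a/x;q)_{\gamma-k}$. The right-hand side $\lambda v+f(x)$ satisfies the Lipschitz condition \eqref{additive3.12} with $A=|\lambda|$, and the hypothesis $|\lambda|\,b^{\nu}(1-q)^{\nu}<1$ is tailored so that the contraction factor appearing in \eqref{additive3.13} is strictly less than one on the whole $[a,b]$; hence Theorem \ref{thm3.4} gives a unique $y\in L^1_{\alpha,\beta,\mu,q}[a,b]$ in a single step, without the need for the subdivision argument used there. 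To expose the explicit form, I start from $y^{(0)}\equiv 0$ and iterate $y^{(m+1)}:=y_0+I^\nu_{q,a+}f+\lambda I^\nu_{q,a+}y^{(m)}$; the semigroup property \eqref{additive2.4} and induction give
$$y^{(m)}(x) \;=\; \sum_{j=0}^{m-1}\lambda^{j}\bigl(I^{j\nu}_{q,a+}y_0\bigr)(x) \;+\; \sum_{j=0}^{m-1}\lambda^{j}\bigl(I^{(j+1)\nu}_{q,a+}f\bigr)(x),$$
which converges in $L^1_q[a,b]$ to $y$ by the contraction.

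For the first series, identity \eqref{additive2.2} applied termwise yields $I^{j\nu}_{q,a+}[t^{\gamma-k}(a/t;q)_{\gamma-k}](x)=\frac{\Gamma_q(\gamma-k+1)}{\Gamma_q(j\nu+\gamma-k+1)}x^{j\nu+\gamma-k}(a/x;q)_{j\nu+\gamma-k}$, and summing over $j$ produces precisely the Mittag-Leffler expansion \eqref{additive5.1}, giving the boundary-data contribution to \eqref{additive5.4}. For the second series, I would write $I^{(j+1)\nu}_{q,a+}f$ as a Jackson integral, interchange summation and $q$-integration, and use the factorization $(qt/x;q)_{(j+1)\nu-1}=(qt/x;q)_{\nu-1}(q^{\nu}t/x;q)_{j\nu}$ to pull out $x^{\nu-1}(qt/x;q)_{\nu-1}$; the remaining $j$-series then matches $E_{\nu,\nu}[\lambda x^{\nu}(q^{\nu}t/x;q)_{\nu};q]$ by \eqref{additive5.1}, producing the integral term of \eqref{additive5.4}.

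The main technical obstacle is the justification of these interchanges and the $L^1_q[a,b]$ convergence of the two series. Iterating \eqref{additive2.7} gives $\|\lambda^{j}I^{(j+1)\nu}_{q,a+}f\|_{L^1_q}\leq \|f\|_{L^1_q}\bigl(|\lambda|K(b-qa)_q^{\nu}/\Gamma_q(\nu+1)\bigr)^{j+1}$, and an analogous bound holds for the $y_0$-series; the common geometric ratio is strictly less than one precisely under the hypothesis $|\lambda|b^{\nu}(1-q)^{\nu}<1$. This geometric decay both legitimizes the termwise reshuffling of Jackson sums and shows that the series in \eqref{additive5.4} converges in $L^1_q[a,b]$ to the unique fixed point identified above.
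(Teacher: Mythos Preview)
Your proposal is correct and follows essentially the same route as the paper: reduce \eqref{additive5.2}--\eqref{additive5.3} to the Volterra $q$-integral equation via Theorem~\ref{thm3.3}, run successive approximations using the semigroup property \eqref{additive2.4} and the power rule \eqref{additive2.2}, and identify the resulting series with the $q$-Mittag--Leffler functions \eqref{additive5.1}, with convergence governed by the hypothesis $|\lambda|b^{\nu}(1-q)^{\nu}<1$. The only cosmetic differences are that the paper starts the iteration at $y_0$ rather than $0$ (yielding the same partial-sum structure shifted by one index) and defers the appeal to Theorem~\ref{thm3.4} to the end, whereas you invoke it up front; also note that in your geometric bound the constant $K$ from \eqref{additive2.7} already equals $(b-qa)_q^{\nu}/\Gamma_q(\nu+1)$, so it should not be multiplied by that quantity again.
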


\begin{proof}     First, we solve the Volterra q-integral equation (\ref{additive3.5}) for 
\begin{eqnarray*}
f(x,y(x)):=\lambda{y(x)}+f(x).
\end{eqnarray*}

For this aim, we apply the method of successive approximations by setting
$$
y_{0}(x)=\sum\limits_{k=1}^n\frac{\xi_k}{\Gamma_q\left(\gamma-k+1\right)}x^{\gamma-k}(a/x;q)_{\gamma-k}
$$
and
\begin{eqnarray}\label{additive5.5}
y_i(x)&=&y_0(x)+
\frac{\lambda{x^{\nu-1}}}{\Gamma_q\left(\nu\right)}\int_{0}^{x}(qt/x;q)_{\nu-1}y_{i-1}(t)d_qt\nonumber\\ &+&\frac{x^{\nu-1}}{\Gamma_q\left(\nu\right)}\int_{0}^{x}(qt/x;q)^{\nu-1}_{q}f(t)d_qt.
\end{eqnarray}

Using   (\ref{additive2.1}) and   (\ref{additive5.5})  we find $y_1(x)$:

$$
y_1(x)=y_0(x)+\lambda\left(I^{\nu}_{q,0+}y_0\right)(x)+\left(I^{\nu}_{q,0+}f\right)(x),
$$
that is,

\begin{eqnarray}\label{additive5.6}
y_{1}(x)&=&\sum\limits_{k=1}^{n}\frac{b_k}{\Gamma_q\left(\nu-k+1\right)}x^{\nu-k}+
\lambda\sum\limits_{k=1}^{n}\frac{b_k}{\Gamma_q\left(\nu-k+1\right)}\left(I^\nu_{q,0+}t^{\nu-k}\right)(x)+
\left(I^{\nu}_{q,0+}f\right)(x)\nonumber\\
&=&\sum\limits_{k=1}^{n}\frac{b_k}{\Gamma_q\left(\nu-k+1\right)}x^{\nu-k}+
\lambda\sum\limits_{k=1}^{n}\frac{b_k x^{2\nu-k}}{\Gamma_q\left(2\nu-k+1\right)}+\left(I^{\nu}_{q,0+}f\right)(x) \nonumber\\
&=&\sum\limits_{k=1}^{n} b_k \sum\limits_{m=1}^2\frac{\lambda^{m-1}x^{m\nu-k}}{\Gamma_q\left(\nu m-k+1\right)}+\left(I^{\nu}_{q,0+}f\right)(x).
\end{eqnarray}

Similarly, using   (\ref{additive2.1}),  (\ref{additive2.2}), (\ref{additive2.4})  and (\ref{additive5.6})  we have for $y_2(x)$ that
\begin{eqnarray*}
y_2(x)&=&y_0(x)+\lambda \left(I^\nu_{q,0+}y_1\right)(x)+\left(I^\nu_{q,0+}f\right)(x) \\
 &=&\sum\limits_{k=1}^{n}\frac{b_k}{\Gamma_q\left(\nu-k+1\right)}x^{\nu-k}\\
 &+&\frac{\lambda}{\Gamma_q\left(\nu\right)} \sum\limits_{k=1}^{n} b_k \sum\limits_{m=1}^2\frac{\lambda^{m-1}}{\Gamma_q\left(\nu m-k+1\right)}\left(I^\nu_{q,0+}t^{m\nu-k}\right)(x)\\
&+&\lambda\left(I^{\nu}_{q,0+}I^{\nu}_{q,0+}f(t)\right)(x)+\left(I^{\nu}_{q,0+}f\right)(x)\\
 &=&\sum\limits_{k=1}^{n}\frac{b_k}{\Gamma_q\left(\nu-k+1\right)}x^{\nu-k}+\lambda\sum\limits_{k=1}^{n} b_k \sum\limits_{m=1}^2\frac{\lambda^{m-1}}{\Gamma_q\left(\nu(m+1)-k+1\right)}x^{\nu(m+1)-k} \\
&+&\lambda\left(I^{2\nu}_{q,0+}f(t)\right)(x)+\left(I^{\nu}_{q,0+}f\right)(x)
\end{eqnarray*}
\begin{eqnarray*}
 &=&\sum\limits_{k=1}^{n}\frac{b_k}{\Gamma_q\left(\nu-k+1\right)}x^{\nu-k}+\lambda\sum\limits_{k=1}^{n} b_k \sum\limits_{m=1}^2\frac{\lambda^{m-1}}{\Gamma_q\left(\nu(m+1)-k+1\right)}x^{\nu(m+1)-k} \\
&+&\frac{\lambda{x^{2\nu-1}}}{\Gamma\left(2\nu\right)}\int\limits_0^xf(t)\left(qt/x;q\right)_{2\nu-1}d_qt+\left(I^{\nu}_{q,0+}f\right)(x).
\end{eqnarray*}

Thus,
\begin{eqnarray*}
y_2(x)&=&\sum\limits_{k=1}^{n}b_k\sum\limits_{m=1}^3
\frac{\lambda^{m-1}x^{\nu{m}-k}}{\Gamma_q\left(\nu{m}-k+1\right)}\nonumber\\
&+&\int_0^x\left[\sum\limits_{m=1}^{2}\frac{\lambda^{m-1}x^{\nu{m}-1}(qt/x;q)_{\nu{m}-1}}{\Gamma_q(\nu{m})}\right] f(t)d_qt.
\end{eqnarray*}

Continuing this process, we derive the following relation for $y_i(x)$:
\begin{eqnarray*}
 y_i(x)&=&\sum\limits_{k=1}^{n}b_k\sum\limits_{m=1}^{i+1}
\frac{\lambda^{m-1}x^{\nu{m}-k}}{\Gamma_q\left(\nu{m}-k+1\right)}\nonumber\\
&+&\int_{0}^{x}\left[\sum\limits_{m=1}^i\frac{\lambda^{m-1}x^{\nu{m}-1}(qt/x;q)_{\nu{m}-1}}{\Gamma_q(\nu{m})}\right] f(t)d_qt\nonumber\\
&=&\sum\limits_{k=1}^{n}b_k\sum\limits_{m=0}^{i}
\frac{\lambda^{m}x^{\nu{(m+1)}-k}}{\Gamma_q\left(\nu{(m+1)}-k+1\right)}\nonumber\\
&+&\int_{0}^{x}\left[\sum\limits_{m=0}^{i-1}\frac{\lambda^{m}x^{\nu{(m+1)}-1}(qt/x;q)_{\nu{(m+1)}-1}}{\Gamma_q(\nu(m+1))}\right] f(t)d_qt.
\end{eqnarray*}

From (\ref{additive2.1C}) and (\ref{additive2.1D}) its follows that 
\begin{eqnarray*}
\frac{\lambda^{m}x^{\nu{m}}({qt}/{x};q)_{\nu{m}}}{\Gamma_q(\nu{m}+\nu)} &=&\left[\lambda{x}^\nu(1-q)^\nu\right]^m\frac{(q^{\nu(m+1)}; q)_\infty }{(q^{\nu{m}+1}t/x; q)_\infty }(1-q)^{\nu-1} \\
&\leq&\left[\lambda{x}^\nu(1-q)^\nu\right]^m\frac{(q^{\nu(m+1)}; q)_\infty }{(q^{\nu{m}+1} ; q)_\infty }(1-q)^{-1}\\
&\leq&\left[\lambda{b}^\nu(1-q)^\nu\right]^m (1-q)^{-1}.
\end{eqnarray*}

Thus, 
\begin{eqnarray*}
\left|\sum\limits_{m=0}^\infty\frac{\lambda^{m}x^{\nu{m}}({qt}/{x};q)_{\nu{m}}}{\Gamma_q(\nu{m}+\nu)}\right|&\leq&\frac{1}{1-q}\sum\limits_{m=0}^\infty\left[|\lambda|{b}^\nu(1-q)^\nu\right]^m\leq\frac{1}{|\lambda|{b}^\nu(1-q)^{\nu+1}}, 
\end{eqnarray*}
which means that this is absolutely and uniformly convergent for $\lambda{b}^\nu(1-q)^\nu<1$. Taking the limit as $i\rightarrow\infty$ and using the well-known formulas (see  \cite[formula (3.7)]{CK2000}), we obtain the following explicit solution $y(x)$ to the $q$-integral equation (\ref{additive5.4}):
\begin{eqnarray*}
y(x)&=&\sum\limits_{k=1}^{n}b_kx^{\nu-k}\sum\limits_{m=0}^\infty
\frac{\lambda^{m}x^{\nu{m}}}{\Gamma_q\left(\nu{m}+\nu-k+1\right)}\nonumber\\
&+&\int_{0}^{x}x^{\nu-1}({qt}/{x};q )_{\nu-1}\left[\sum\limits_{m=0}^\infty\frac{\lambda^{m}x^{\nu{m}}({qt}/{x};q)_{\nu{m}}}{\Gamma_q(\nu{m}+\nu)}\right] f(t)d_qt.
\end{eqnarray*}

On the basis of Theorem \ref{thm3.4} and (\ref{additive5.1}), the last expression represents the explicit solution to the Volterra $q$-integral equation (\ref{additive5.4}) and hence to the Cauchy-type problem (\ref{additive5.2})-(\ref{additive5.3}).
\end{proof}

\section{Conflict of Interests}
The authors declare that they have no conflict of interest.

\end{document}